\documentclass{amsart}
\usepackage{graphicx}
\usepackage{subcaption}
\captionsetup[figure]{labelsep=space}
\captionsetup[table]{labelsep=space}
\usepackage{amsmath, amsfonts, amssymb, amsthm, mathtools}
\usepackage{microtype}
\usepackage{hyperref}
\usepackage{booktabs}
\usepackage{siunitx}
\usepackage{tikz, pgfplots}
\pgfplotsset{compat=newest}
\usetikzlibrary{spy} 
\usetikzlibrary{backgrounds}
\pgfplotsset{
    compat=1.15,
    mesh plot style/.style={
        xtick={31, 63, 127, 255, 511},
        xticklabels={31,63,127,255, 511},
        xlabel={Mesh size ($N$)},
        width=1.1\textwidth,
        legend style={font=\tiny, row sep=-2pt, draw=none, fill=white, inner sep=1pt},
        legend cell align=left,
        grid=major,
    },
    mesh plot style 2/.style={
        xlabel={Running time ($s$)},
        width=1.1\textwidth,
        legend style={font=\tiny, row sep=-2pt, draw=none, fill=white, inner sep=1pt},
        legend cell align=left,
        grid=major,
    },
    M2 style/.style = { green!80!black, thick, mark size= 2.5pt, mark=| },
    bellman style/.style = { blue, thick, mark=o, mark size=2.5pt },
    M1 style/.style = { orange!80!black, thick, mark size=2.5pt, mark=x },
}

\numberwithin{equation}{section}

\DeclareMathOperator{\Leb}{Leb}
\DeclareMathOperator{\tr}{tr}
\DeclareMathOperator{\MA}{MA}
\DeclareMathOperator{\cof}{cof}
\theoremstyle{plain}
\newtheorem{theorem}{Theorem}[section]

\newtheorem{corollary}[theorem]{Corollary}

\theoremstyle{definition}

\newcommand{\weakstar}{\mathop{w^*\text{-}\lim}}

\theoremstyle{remark}
\newtheorem*{remark}{Remark}

\sisetup{table-format = 5.2, table-alignment-mode = format}

\begin{document}

\title[A numerical method for the Monge--Ampère equation]{A fast Bellman algorithm for  the real Monge--Ampère equation}

\subjclass[2020]{Primary 65N06; Secondary 35J60, 35J96, 65N12}

\author{Aleksandra Le}
\email{ai\_my\_aleksandra.le@math.lth.se}
\address{Centre for Mathematical Sciences\\
  Lund University\\
  Box 118, SE-221 00 Lund, Sweden}
  
\author{Frank Wikström}
\email{frank.wikstrom@math.lth.se}
\address{Centre for Mathematical Sciences\\
  Lund University\\
  Box 118, SE-221 00 Lund, Sweden}

\begin{abstract}
In this paper, we introduce a new numerical algorithm for solving the Dirichlet problem for the real Monge--Ampère equation. The idea is to represent the non-linear Monge--Ampère operator as an infimum of a class of linear elliptic operators and use Bellman's principle to construct a numeric scheme for approximating the operator attaining this infimum.

Moreover, we prove convergence of the proposed algorithm (under suitable technical assumptions) and discuss its strengths and weaknesses. We also demonstrate the performance of the method on several examples with various degrees of regularity and degeneracy and compare the results to two existing methods. Our method runs considerably faster than the ones used for comparison, improving the running time by a factor of 3--10 for smooth, strictly convex examples, and by a factor of 20--100 or more for mildly degenerate examples.
\end{abstract}

\keywords{non-linear elliptic differential equation, real Monge--Ampère equation, linearized Monge--Ampère equation, Bellman principle, fixed point algorithm}
\maketitle

\section{Introduction}

The Monge--Ampère equation is a fully non-linear partial differential equation,
originating from the study of surfaces with prescribed curvature which goes 
back to Monge in the late 18th and Ampère in the early 19th century.
Since then, variations of this equation have found a number of applications, 
for example in optimal transport~\cite{Philippis, Villani} and Riemannian geometry
(the local isometric embedding problem~\cite{Lin}). 
The equation is also of great theoretical interest and is often used as a model 
example of a fully non-linear PDE.  The complex version of the Monge--Ampère equation  plays a central role 
in pluripotential theory,  i.e.\ in the study of plurisubharmonic functions on 
domains in $\mathbb{C}^n$ and more recently on Kähler manifolds~\cite{Guedj-Zeriahi}.

In this paper, we introduce a novel numerical method for solving the Dirichlet 
problem for the real Monge--Ampère equation, based on \emph{Bellman's principle}: 
representing the non-linear operator as an infimum of elliptic linear partial
differential operators. We construct a numeric iterative scheme for approximating 
the operator attaining this infimum. 

Under suitable technical assumptions, as well as the assumption that our algorithm 
actually produces a sequence of strongly convex approximants, we prove convergence,
see Theorem~\ref{thm:convergence} for details.
Also, any convex approximants always majorises the solution to the original Monge--Ampère equation.

Numerical experiments illustrate that the method works well in settings where the 
solution is smooth and strictly convex. In these cases, it outperforms recently 
published methods by a factor of~3--10 in execution time. For mildly singular equations, 
the speed-up is even better: in some cases improving the running time by a factor of~100 or more.
In the fully degenerate case, when the right hand side of the equation vanishes on a 
non-empty open set, our proposed method does not converge in general.
We intend to return to the fully degenerate problem in a subsequent paper.

\section{Background}

The Dirichlet problem for the Monge--Ampère operator can be formulated as
\begin{equation}\label{eq:MA}
    \begin{cases}
        \det(D^2u(x)) = f(x), & x \in \Omega, \\
        u(x) = \phi(x) ,      & x \in \partial\Omega,
    \end{cases}
\end{equation}
where $\Omega \subset \mathbb{R}^n$ is a convex domain, $f \ge 0$ and the boundary 
values of  $u$ are suitably interpreted. Here, $D^2 u = (u_{jk})$ denotes the Hessian 
of~$u$. By restricting our attention to convex solutions of~\eqref{eq:MA},
the equation is (degenerate) elliptic, and we will only consider 
such solutions. If we 
impose some extra conditions on $\Omega$, $f$ and  $\phi$, we can guarantee that 
the equation has a unique solution. Note that the left-hand side, 
\emph{the Monge--Ampère operator}, $\MA(u) := \det(D^2u(x))$ makes perfect sense for~$C^2$ 
functions, and it can be interpreted pointwise also for functions in~$W_{\text{loc}}^{2,n}$. 
For functions of lower regularity, the Monge--Ampère operator 
must be viewed in  a generalized sense, and since the operator is fully non-linear, 
there is no direct  interpretation in the sense of distributions. However, convex 
functions are  automatically locally Lipschitz and it turns out that we can define 
the Monge--Ampère measure (in the sense of Aleksandrov) of any convex function~$u$, by
\[
    \mu_u(E) = \Leb(\partial u(E))
\]
for any Borel subset $E \subset \Omega$, where $\Leb$ denotes the Lebesgue measure,
$\partial u(E) = \cup_{x\in E} \partial u(x)$, and $\partial u(x)$ is the subgradient 
of $u$, i.e.\ the set of all 
slopes of the supporting hyperplanes to the graph of $u$ at $x$. If $u \in C^2$ 
and $D^2 u$ is positive semi-definite, then $\mu_u(E) = \det(D^2 u)$. We should mention, 
that in contrast to the real setting, it is impossible to define the complex Monge--Ampère
operator for every plurisubharmonic function. We refer to the monograph by Guedj and 
Zeriahi~\cite{Guedj-Zeriahi} for the theory of complex Monge--Ampère equations. 

There is a vast number of analytic results in the literature concerning existence and 
regularity of solutions  to~\eqref{eq:MA}. A complete overview is outside the 
scope of this paper, but for example, it follows from results by Aleksandrov~\cite{Alexandrov}
and Bakelman~\cite{Bakelman} from the 1950's that if~$\Omega$ is assumed to be a bounded 
strictly convex domain, $f$ is  a finite Borel measure on~$\Omega$, and 
$\phi \in C(\partial\Omega)$,  then~\eqref{eq:MA} has a unique convex solution~$u$ (in
the sense of Aleksandrov) with $u \in C(\bar\Omega)$.  

Furthermore, if $\Omega$ is strictly convex (or uniformly convex in the unbounded 
setting) with $C^{k+2, \alpha}$ boundary, $0 < \phi \in C^{k+2, \alpha}(\partial\Omega)$
and $f \in C^{k,\alpha}(\bar\Omega)$ for some integer $k \ge 2$ and some $0 < \alpha < 1$, 
then the Dirichlet problem~\eqref{eq:MA} has a unique convex solution
$u \in C^{k+2, \alpha}(\bar\Omega)$, see~\cite{CKNS, Ivockina, Krylov}. 

If we merely  assume that $\Omega$ is convex, not necessarily strictly convex, 
the situation is a little less straight-forward, and we need to assume that~$\phi$ 
is convex on  all line segments in $\partial\Omega$ to guarantee 
existence of solutions to~\eqref{eq:MA}. Similarly, if~$f$ is only 
assumed to be non-negative instead of strictly positive, the solutions
to the Dirichlet problem will generally be less smooth, even in strictly
convex domains. If $f$ has zeros, the equation is called \emph{degenerate}.
See for example B\l{}ocki~\cite{Blocki}  for results in this
direction, where $C^{1,1}$-regularity of the solution is shown under 
suitable technical assumptions. We will come back to this phenomenon in example~\ref{example:constant_MA}.

For an accessible and thorough overview of the Aleksandrov solutions as well as 
results on classical solutions including the ones mentioned, we refer to the recent 
monographs by Figalli~\cite{Figalli} and Le~\cite{Le}.

There have been a number of suggested numerical methods for solving the Dirichlet
problem for the Monge--Ampère operator. As far as we are aware, the first approach 
was due to Oliker and Prussner (1988)~\cite{Oliker} who proposed a direct discretization 
of the Aleksandrov interpretation of the Monge--Ampère operator. Since then, a number of
other approaches to solving Monge--Ampère equations have been tried: finite difference
discretization, often using wide stencils to obtain stable and consistent 
methods~\cite{Benamou, Benamou2, Froese, Froese2, Oberman}, variational
methods~\cite{Belgacem:opt, Feng}, fixed-point methods~\cite{Benamou, Belgacem} as well as 
finite elements~\cite{Feng:FEM, Lakkis} only to mention a few.

After the preparation of an earlier draft of this paper, we learned that a similar 
method as ours (for periodic boundary values) has been previously proposed by Loeper and 
Rapetti~\cite{Loeper}. Their approach is somewhat different than ours, and derive their 
method as a Newton type iteration for the non-linear differential operator.

\section{Bellman's principle}

Our method relies on the following linear algebra result, sometimes called 
\emph{Bellman's principle}. In the setting of complex Monge--Ampère equations, 
this idea was introduced by Gaveau~\cite{Gaveau} and has later found several 
theoretical applications, both in the real and the complex setting. 
We will formulate the result in the real 
setting and for convenience of the reader, we provide a proof of Bellman's 
principle.

\begin{theorem}\label{thm:bellman}
    Let $M$ be a positive semi-definite symmetric $n \times n$ real matrix. Then
    \begin{equation}\label{eq:bellman}
        (\det M)^{1/n} = \frac{1}{n} \inf_{B \in \mathcal{B}} \tr(BM),
    \end{equation}
    where $\tr(\cdot)$ denotes the trace of a matrix, and $\mathcal{B}$ denotes the set of positive definite symmetric $n \times n$-matrices
    with determinant~$1$. Also, if $M$ is positive definite, the infimum is attained by
    some matrix~$B$. 
\end{theorem}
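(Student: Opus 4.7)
The plan is to reduce the identity to the classical AM--GM inequality applied to the eigenvalues of an auxiliary symmetric matrix, then exhibit an explicit minimiser when $M$ is positive definite, and finally handle the semi-definite case by a perturbation argument.

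First I would establish the lower bound. Given $B \in \mathcal{B}$, the matrix $BM$ need not be symmetric, but it is similar to $B^{1/2} M B^{1/2}$, which is symmetric and positive semi-definite. Hence $\tr(BM) = \tr(B^{1/2} M B^{1/2})$ equals the sum of the (non-negative) eigenvalues $\lambda_1,\dots,\lambda_n$ of $B^{1/2} M B^{1/2}$, while $\det(B^{1/2} M B^{1/2}) = \det(B)\det(M) = \det(M)$ because $B \in \mathcal{B}$. Applying AM--GM to $\lambda_1,\dots,\lambda_n$ then yields
\[
    \tfrac{1}{n}\tr(BM) \;=\; \tfrac{1}{n}\sum_i \lambda_i \;\ge\; \Bigl(\prod_i \lambda_i\Bigr)^{1/n} \;=\; (\det M)^{1/n},
\]
which gives one direction of \eqref{eq:bellman} uniformly in $B$.

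Next I would prove that equality can be achieved when $M$ is positive definite. Equality in AM--GM forces all eigenvalues of $B^{1/2} M B^{1/2}$ to coincide, and they must then all equal $(\det M)^{1/n}$, i.e.\ $B^{1/2} M B^{1/2} = (\det M)^{1/n} I$. Solving for $B$ gives the candidate $B_* := (\det M)^{1/n} M^{-1}$, which I would verify is symmetric positive definite (since $M$ is) and satisfies $\det B_* = (\det M) \cdot (\det M)^{-1} = 1$, so $B_* \in \mathcal{B}$. A direct computation then shows $\tfrac{1}{n}\tr(B_* M) = (\det M)^{1/n}\,\tfrac{1}{n}\tr(I) = (\det M)^{1/n}$, proving the infimum is attained.

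Finally, for the semi-definite (but not definite) case, the infimum is in general not attained, so I would argue by approximation: apply the definite case to $M_\varepsilon := M + \varepsilon I$ with optimiser $B_\varepsilon := (\det M_\varepsilon)^{1/n} M_\varepsilon^{-1} \in \mathcal{B}$, and note that
\[
    \tfrac{1}{n}\tr(B_\varepsilon M) \;\le\; \tfrac{1}{n}\tr(B_\varepsilon M_\varepsilon) \;=\; (\det M_\varepsilon)^{1/n} \;\longrightarrow\; (\det M)^{1/n}
\]
as $\varepsilon \to 0^+$, by continuity of the determinant. Combined with the uniform lower bound from the first step, this forces $\tfrac{1}{n}\inf_B \tr(BM) = (\det M)^{1/n}$. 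I do not anticipate a serious obstacle: the only subtlety is the non-symmetry of $BM$, which is handled cleanly by conjugating by $B^{1/2}$; everything else is AM--GM and continuity.
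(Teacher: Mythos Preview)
Your proof is correct. The lower bound via conjugation by $B^{1/2}$ and AM--GM, and the explicit minimiser $B_* = (\det M)^{1/n} M^{-1}$ in the positive definite case, match the paper's argument essentially line for line.

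The only genuine difference is in the semi-definite case. The paper argues constructively: it first diagonalises $M$, then for a diagonal $M$ of rank $p<n$ builds explicit diagonal matrices $B_c \in \mathcal{B}$ (small entries on the nonzero eigenvalue slots, large compensating entries on the kernel slots) with $\tr(B_c M) = c\,\tr(M) \to 0$ as $c \to 0^+$, and finally transports this back to general $M$ by orthogonal conjugation. Your perturbation argument $M_\varepsilon = M + \varepsilon I$ with $\tr(B_\varepsilon M) \le \tr(B_\varepsilon M_\varepsilon) = n(\det M_\varepsilon)^{1/n} \to 0$ is a cleaner route: it avoids the diagonalisation step and the explicit bookkeeping, relying only on continuity of $\det$ and positivity of $\tr(B_\varepsilon)$. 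The paper's version, on the other hand, makes the approximate minimisers completely explicit, which is marginally more informative if one cares about how the infimum fails to be attained.
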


\begin{proof}
   First, we show that
    \begin{equation*}
        (\det M)^{1/n}\leq \frac{1}{n}\inf_{B\in \mathcal{B}}\tr(BM).
    \end{equation*}
    To this end, we observe that for any symmetric and positive definite matrix $B$, we have 
    \begin{equation*}
    \begin{split}
        \det(BM-\lambda I) 
        &= \det(B^{1/2}B^{1/2}MB^{1/2}B^{-1/2}-\lambda I) \\ 
        &= \det(B^{1/2})\det(B^{1/2}MB^{1/2}-\lambda I)\det(B^{-1/2}) \\
        &= \det(B^{1/2}MB^{1/2}-\lambda I).
    \end{split}
    \end{equation*}
    Since $B^{1/2}MB^{1/2}$ is symmetric and positive semi-definite, it can be diagonalised by a unitary matrix $Q$ and the eigenvalues of $BM$, $\{\lambda_{k}\}_{1\leq k\leq n}$, are non-negative. 
    Moreover, for $B$ Hermitian with $\det(B)=1$ we have 
    \begin{equation*}
    \begin{split}
        \det(M) = \det (BM) &= \prod_{1\leq k\leq n} \lambda_{k}\leq \Big(\frac{1}{n}\sum_{1\leq k\leq n}\lambda_{k}\Big)^{n} \\
        &= \Big(\frac{1}{n}\tr(Q^{*}B^{1/2}MB^{1/2}Q)\Big)^{n} \\ &
         = \Big(\frac{1}{n}\tr(B^{1/2}QQ^{*}B^{1/2}M)\Big)^{n}=\Big(\frac{1}{n}\tr(BM)\Big)^{n},
    \end{split}
    \end{equation*}
    using the inequality between the arithmetic and geometric mean.
    Next, we show that the infimum is attained for positive definite matrices. If $M$ is a positive definite matrix then $B\coloneqq (\det M)^{1/n}M^{-1}$ is a well-defined, positive definite matrix with $\det B=\det M \cdot \det M^{-1}=1$ which satisfies the statement. Indeed,
    \begin{equation*}
    \tr{BM}=\tr{((\det M)^{1/n}M^{-1}M)}=(\det M)^{1/n}n.
\end{equation*}
    If $M$ is diagonal and positive semi-definite, let $p$ be the rank of $M$. Clearly, $p<n$. Without loss of generality, we can assume that $\lambda_{k}=0$ for $k\in \{ p+1, \dots, n \}$. In this case, the equality can be obtained by choosing any constant $c>0$ and constructing a diagonal matrix $B$ with $b_{kk}\coloneqq c$ for $k\in \{1,\dots,p\}  $ and $b_{kk}\coloneqq c^{-p/(n-p)}$ for $k\in \{p+1, \dots n\} $ so that $\det(B)=1$. Then, we have
    \begin{equation*}
        \begin{split}
    \tr(BM) &=\sum_{k=1}^{p}b_{kk}\lambda_{k}=c\cdot \tr(M) \implies \\
    &\qquad \inf_{B\in \mathcal{B}}\tr{(BM)}\leq \inf_{c>0}c\cdot \tr(M)=0=n\det(M)^{1/n}.
        \end{split}
    \end{equation*}
    Combining this with $n \det(M)^{1/n}\leq \inf_{B\in \mathcal{B}} \tr(BM)$ derived at the beginning, we obtain Equation \eqref{thm:bellman}.
    For a positive semi-definite matrix $M$ that is not diagonal, it suffices to notice that there exists $T$ such that $M=T^{*}\Lambda T$, where $\Lambda$ is a diagonal matrix with the eigenvalues of $M$ on the diagonal. By the above, we have
    \begin{equation*}
    \begin{split}
        (\det M)^{1/n} & = (\det \Lambda)^{1/n}= \frac{1}{n}\inf_{B\in \mathcal{B}}\tr(B\Lambda)= \frac{1}{n}\inf_{B\in \mathcal{B}}\tr(BTMT^{*})\\ & = 
        \frac{1}{n}\inf_{B\in \mathcal{B}}\tr(T^{*}BTM) = \frac{1}{n}\inf_{B' \in \mathcal{B}}\tr(B'M).\qedhere
    \end{split}
    \end{equation*}
\end{proof}
Using Bellman's principle, we can describe the Monge--Ampère operator (of a
sufficiently smooth  function) as an infimum over a class of elliptic partial 
differential operators. More precisely:

\begin{corollary} \label{cor: Bellmans principle}
    Let $\Omega$ be a domain in $\mathbb{R}^n$ and assume that 
    $u \in C^2(\Omega)$ is convex. Then
    \begin{equation*}
        (\MA(u)(x))^{1/n} = \frac{1}{n} \inf_{B \in \mathcal{B}} \tr\big(B\,D^2u(x))\big).
    \end{equation*}
    In particular, if $u$ is strongly convex, the infimum is attained by the $\mathcal{B}$-valued function
    \[ B(x)=(\det D^{2}u)^{1/n}(D^{2}u)^{-1}(x). \]
\end{corollary}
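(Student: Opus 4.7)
The plan is to apply Theorem~\ref{thm:bellman} pointwise, with $M = D^2 u(x)$ at each $x \in \Omega$. Since $u \in C^2(\Omega)$ is convex, a standard fact says that $D^2u(x)$ is symmetric and positive semi-definite at every $x$. This is exactly the hypothesis needed for~\eqref{eq:bellman}, so fixing $x$ and substituting $M = D^2u(x)$ immediately gives
\begin{equation*}
    \bigl(\det D^2 u(x)\bigr)^{1/n} = \frac{1}{n} \inf_{B \in \mathcal{B}} \tr\bigl(B\, D^2 u(x)\bigr),
\end{equation*}
which by definition of $\MA(u)$ is the first assertion.

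For the second statement, recall that $u$ strongly convex means $D^2u(x)$ is strictly positive definite everywhere (with a uniform lower bound on eigenvalues, but only pointwise positivity is needed here). The final clause of Theorem~\ref{thm:bellman} asserts that the infimum is attained, and inspection of its proof produces an explicit optimiser: for a positive definite $M$ the choice $B = (\det M)^{1/n} M^{-1}$ lies in $\mathcal{B}$ and gives $\tr(BM) = n (\det M)^{1/n}$. Accordingly I would define
\begin{equation*}
    B(x) \coloneqq \bigl(\det D^2 u(x)\bigr)^{1/n}\, \bigl(D^2 u(x)\bigr)^{-1},
\end{equation*}
verify pointwise that $B(x)$ is symmetric, positive definite, and has unit determinant, and note that the trace identity above yields the desired expression for $\MA(u)(x)^{1/n}$.

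There is essentially no analytic obstacle here; the corollary is a direct pointwise specialisation of the linear-algebra theorem. The only step requiring a touch of care is recording that strong convexity of $u$, rather than just convexity, is what guarantees invertibility of $D^2u(x)$ at every point so that the explicit formula for $B(x)$ makes sense. (If one also wanted $B$ to inherit continuity or smoothness from $u$, this would follow automatically from the formula, since matrix inversion and the $n$-th root of a positive quantity are smooth, but the statement of the corollary only asks for a $\mathcal{B}$-valued function, so no regularity argument is needed.)
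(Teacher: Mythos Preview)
Your proposal is correct and matches the paper's treatment: the corollary is stated immediately after Theorem~\ref{thm:bellman} with no separate proof, precisely because it is the pointwise specialisation $M = D^2u(x)$ that you describe, together with the explicit minimiser $B = (\det M)^{1/n} M^{-1}$ exhibited in the proof of the theorem.
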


Note that, for every choice of $\mathcal{B}$-valued function~$B$, the 
operator 
\[ u \mapsto \tr\big(B(x)D^2u(x)\big) \]
is a linear elliptic partial differential operator, and if we somehow know 
the associated $B(x)$ corresponding to the solution $u$ of the Dirichlet problem~\eqref{eq:MA}, then we can recover $u$ by solving the \emph{linear}
elliptic Dirichlet problem
\begin{equation}\label{eq:linear_eq}
    \begin{cases}
        \tr(B(x)\,D^2u(x)) = n\,\big(f(x)\big)^{1/n}, & x \in \Omega, \\
        u(x) = \phi(x) ,                              & x \in \partial\Omega,
    \end{cases}
\end{equation}
for which there are highly efficient numerical algorithms. 



Unfortunately, it seems like there are no known effective characterizations of when 
the solution to Equation~\eqref{eq:linear_eq} is convex, even in the case when $B \equiv I$, i.e.
for Poisson's equation. 
Under suitable technical conditions and the assumption that our method
produces a sequence of strictly convex approximants, we can show convergence, see Theorem~\ref{thm:convergence}.

We should point out that there is no proof of convergence for most methods that have been proposed for the Monge--Ampère equation. In particular,
this is the case for the M1 method we compare our method to in Section~\ref{sec:examples}. The convergence (under suitable conditions) of method M2, also discussed in Section~\ref{sec:examples}, has been recently proved in~\cite[Theorem 4.1]{Chen}, at least in the case when the boundary data is equal to zero. Notable methods with a proof of convergence are the direct discretization of the Aleksandrov construction proposed in~\cite{Oliker} and the wide stencil finite difference discretizations~\cite{Oberman} which are also known to converge (to a viscosity solution).

\section{Proof of convergence}

The constructive proof of Bellman's principle, i.e.\ Theorem~\ref{thm:bellman}, suggests a method of constructing a sequence of linear Dirichlet problems of the form~ \eqref{eq:linear_eq} converging to the Dirichlet problem for the Monge--Ampère equation~\eqref{eq:MA}. The result is the theorem below.
\begin{theorem} \label{thm:convergence}
    Let $\alpha\in (0,1)$ and let $\Omega\subset \mathbb{R}^{n}$ be a strictly convex, bounded, $C^{2,\alpha}$ domain, $\phi\in C^{2,\alpha}(\bar{\Omega})$ and $f\in C^{0,\alpha}(\bar{\Omega})$ with strictly positive values on $\bar{\Omega}$. Assume that there exist solutions $\{u_{k}\}_{k\geq 0}\subset C^{2,\alpha}(\bar{\Omega})$ to the following linear Dirichlet problems
    \begin{equation} \label{eq: linear system of pdes}
    \begin{cases}
        \tr(B_{k}(x)D^{2}u_{k}(x))=nf^{1/n}(x), & x \in \Omega\\
        u_{k}(x)=\phi(x), & x \in \partial \Omega \\
        u_{k}(x) \text{ is strongly convex}.
    \end{cases}
\end{equation}
where $B_{k}=(\det{D^{2}u_{k-1}})^{1/n} (D^{2}u_{k-1})^{-1}=\cof D^{2}u_{k-1}$ for $k\geq 1$ and $B_{0}$ be a $\mathcal{B}$-valued function with coefficients $(B_{0})_{ij}\in C^{0,\alpha}(\bar{\Omega})$. Moreover, denote by $\lambda_{\text{min}}^{k}$ the smallest eigenvalue of $D^{2}u_{k}$ and assume that $\{\lambda_{\min}^{k}\}_{k\geq 0} $ is uniformly bounded away from zero, i.e.\ there exists $\epsilon>0$ such that $\lambda_{\text{min}}^{k} \geq \epsilon$ for every $k\geq 0$. Then, the sequence of solutions $\{u_{k}\}_{k\geq 0}$ converges uniformly to a convex function $\tilde u \in C^{1,1}(\bar\Omega)$ solving the Monge--Ampère equation~\eqref{eq:MA}.
\end{theorem}
Before proving the above theorem, let us first discuss the requirement of the existence of solutions to \eqref{eq: linear system of pdes} and their strong convexity.
Under the regularity assumptions stated above on the domain $\Omega$, the functions $f$ and $\phi$,  and the coefficients $(B_{0})_{ij}$, standard theory of linear elliptic PDEs (see Gilbarg and Trudinger~\cite[Theorem 6.14]{Gilbarg}) provides existence and uniqueness of a solution $u_{0}\in C^{2,\alpha}(\bar{\Omega})$.  

Next, let us investigate solvability and well-definiteness of the next Dirichlet problem in the sequence. We would like to generalise this argument for any $k\geq 0$. To this end, assume the existence of the solution $u_{k}\in C^{2,\alpha}(\bar{\Omega})$.

Observe that $B_{k+1}\coloneqq (\det D^{2}u_{k})^{1/n}\,(D^{2}u_{k})^{-1}$ is not necessarily well-defined, nor is the next differential operator $L_{k+1}(v)\coloneqq \tr(B_{k+1}D^{2}v)$ strictly elliptic. Consequently, the solution to the corresponding Dirichlet problem is not guaranteed.

However, strong convexity of $u_{k}$ renders the next differential operator $L_{k+1}$ well-defined and strictly elliptic. Indeed, first note that from $\epsilon \leq \lambda_{\text{min}}^{k}$ it follows that $D^{2}u_{k}$ is positive definite and hence invertible.
Recall that $B_{k+1}\coloneqq (\det D^{2}u_{k})^{1/n}(D^{2}u_{k})^{-1}$, by the above $B_{k+1}$ is well-defined and positive definite with $\det B_{k+1}=1$. We can use the Corollary to Bellman's principle 
    (Corollary~\ref{cor: Bellmans principle}) to conclude that  
\begin{equation*} 
    n\epsilon \leq n(\det D^{2}u_{k})^{1/n}=\tr{B_{k+1}D^{2}u_{k}}\leq \tr{B_{k}D^{2}u_{k}}=nf^{1/n} \text{ on } \bar{\Omega}.
\end{equation*}
Thus, 
\begin{align} 
    \epsilon^{n} \leq \det D^{2}u_{k}\leq f \text{ on } \bar{\Omega}. \label{eq: bounds on nu_k}
\end{align}
Moreover, existence of a uniform lower bound of the eigenvalues $\lambda^{k}_{min}$ implies existence of a uniform upper bound for the largest eigenvalues denoted by $\lambda^{k}_{max}$. Indeed, Eq. \eqref{eq: bounds on nu_k} yields 
\begin{equation*}
    \epsilon  \leq \lambda^{k}_{min}\leq \lambda^{k}_{max}\leq f/\epsilon^{n-1} , \qquad\text{for $k\geq 0$.} \label{eq: eigenvalue bounds}
\end{equation*} 
By diagonalising $B_{k+1}$ we have that the smallest and the largest eigenvalues are uniformly bounded as follows
\begin{equation*} \label{eq: bounds for eigenvalues of B_k}
\begin{split}
    \gamma_{min}^{k}& =(\det D^{2}u_{k})^{1/n}/\lambda_{max}^{k}\geq (\epsilon^{n})^{1/n}/(f/\epsilon^{n-1})\geq \epsilon/\|f\|_{C(\bar{\Omega})} \text{ and }\\
     \gamma_{max}^{k}& =(\det D^{2}u_{k})^{1/n}/\lambda_{min}^{k}\leq \|f^{1/n}\|_{C(\bar{\Omega})}/\epsilon.
\end{split}
\end{equation*}
From the lower bound of $\gamma_{min}^{k}$ follows strict ellipticity of $L_{k+1}$. Furthermore, the regularity of the solution, i.e. $u_{k}\in C^{2,\alpha}(\bar{\Omega})$ results in $(B_{k+1})_{ij}\in C^{0,\alpha}(\bar{\Omega})$ as $(B_{k+1})_{ij}$ are obtained by adding, subtracting and multiplying terms  $\partial_{ij}^{2}u_{k}$.

Thus $L_{k+1}$ is strictly elliptic with the necessary regularity of coefficients for the corresponding Dirichlet problem to have a unique solution $u_{k+1}\in C^{2,\alpha}(\bar{\Omega})$ (by another application of Theorem 6.14 in Gilbarg and Trudinger).

\begin{proof}
We begin with showing monotonicity and hence convergence of the sequence of solutions 
$\{u_{k}\}_{k\geq 0}$. 

Let us define $\nu_{k}\coloneqq \det D^{2}u_{k}(x)$ and $\eta_k \coloneqq u_{k+1}-u_{k}$. Then 
\begin{align*}
    \tr B_{k+1}D^{2}\eta_{k} 
    &= \tr B_{k+1}D^{2} u_{k+1 } - \tr B_{k+1}D^{2} u_{k} \\
    &= nf^{1/n} - \tr (\det D^{2}u_{k})^{1/n}\,(D^{2}u_{k})^{-1} D^{2} u_{k} \\
    &= nf^{1/n} - n (\det D^2 u_k)^{1/n} = nf^{1/n} - n(\nu_k)^{1/n}.
\end{align*}
Hence $\eta_k$ solves the following linear elliptic PDE, the coefficients of which are elements of the matrix $B_{k+1}\coloneqq \cof D^{2}u_{k}$,
\begin{equation*}
    \begin{cases}
        \tr B_{k+1}D^{2}\eta_{k}=n(f)^{1/n}-n(\nu_{k})^{1/n} \text{ in } \Omega,\\
        \eta_{k}=0 \text{ on } \partial \Omega.
    \end{cases}
\end{equation*}
Since the operator $L_{k+1}(v)\coloneqq \tr B_{k+1}(D^{2}v)$ is strictly elliptic and Eq.~\eqref{eq: bounds on nu_k}) gives $n(f)^{1/n}-n(\nu_{k})^{1/n}\geq 0$, the maximum principle for linear elliptic PDEs (\cite[Theorem 3.1]{Gilbarg}) yields
\begin{equation*}
     u_{k}-u_{k-1} = \eta_{k}\leq \max_{\bar{\Omega}}\eta_{k}=\max_{\partial \Omega}\eta_{k}=0,
\end{equation*}
or in other words, for every $k \ge 0$.
\begin{equation}
    u_{k} \leq u_{k-1} \text{ on } \bar{\Omega}. \label{eq: decreasing seq}
\end{equation}
It follows from the above and from the comparison 
    principle for the real Monge--Ampère operator (see for example~\cite[Theorem~3.21]{Le}) that $\{u_{k}\}_{k\geq 0}$ is a decreasing sequence bounded from below by $u_{MA}$, the solution to the corresponding Monge--Ampère equation~\eqref{eq:MA}. Hence, the sequence converges to some function $\tilde{u}\coloneqq \lim_{k\to\infty}u_{k}$. Moreover, for every $k \geq 0$ and $\psi\in C^{\infty}_{0}(\Omega)$
\begin{equation*}
    \int_{\Omega} (\partial_{ij}^{2}u_{k+1}) \psi dV=\int_{\Omega}u_{k} (\partial_{ij}^{2}\psi) dV.
\end{equation*}
The monotone convergence theorem implies that all second partial derivatives $\{\partial_{ij}^{2}u_{k}\}_{k}$ converge weak-star. Hence, we can define a limit function $\nu\coloneqq \weakstar_{k\to \infty}\nu_{k}$. 
Since $0<\epsilon^{n}\leq \nu, \nu_{k}\leq f$ for all $k\geq 0$ (Eq. \eqref{eq: bounds on nu_k}), we can interpret $\nu_{k}$ and $\nu$ as finite Borel measures. We note that $\{u_{k}\}_{k\geq 0}$ are Aleksandrov solutions to the following non-linear Dirichlet problems:
\begin{equation} \label{eq: Monge Ampere seq}
    \begin{cases}
        \det D^{2}u_{k}(x)=\nu_{k}(x), & x \in \Omega, \\
        u_{k}(x)=\phi(x), & x\in  \partial \Omega, \\
        u_{k} \text{ strongly convex.}
    \end{cases}
\end{equation}
Next, we use compactness of Aleksandrov solutions to the Dirichlet problem \eqref{eq: Monge Ampere seq} (\cite[Theorem 3.35]{Le}) to conclude that the limit function $\tilde{u}\coloneqq \lim_{k\to\infty}u_{k}\in C(\bar{\Omega})$ and satisfies the following
\begin{equation*} 
    \begin{cases}
        \det D^{2}\tilde{u}(x)=\nu(x), & x \in \Omega, \\
        \tilde{u}(x)=\phi(x), & x\in  \partial \Omega, \\
        \tilde{u} \text{ convex.}
    \end{cases}
\end{equation*}
 Since the limit function is continuous and the sequence monotone, Dini's theorem gives that the convergence is uniform. 
It remains to show that $\tilde{u}$ solves the initial Monge--Ampère equation~\eqref{eq:MA} by showing that $f=\nu$ as measures. To this end, let us note the following
\begin{equation}  \label{eq: f equals to nu_k plus something}
\begin{split}
    n(f)^{1/n}&=\tr(B_{k+1}D^{2}u_{k+1})=\tr B_{k+1}(D^{2}u_{k+1}-D^{2}u_{k})+\tr B_{k+1}D^{2}u_{k}\\
    & =\underbrace{\tr B_{k+1}(D^{2}u_{k+1}-D^{2}u_{k})}_{\coloneqq \tau}+ n(\underbrace{\nu_{k}}_{\xrightarrow[]{w^{*}} \nu})^{1/n}.
\end{split}
\end{equation}
 Hence, it remains to show that $\tau\to 0$ weak-star. Since all second partial derivatives $\partial_{ij}^{2}u_{k}$ converge weak-star, it suffices to uniformly bound terms $(B_{k})_{ij}$. To this end, we first define bilinear forms $(x,y)_{k}\coloneqq (B_{k}x,y)$ 
for all $ x,y \in \mathbb{R}^{n}$. From Rayleigh quotients, we conclude that 
$\gamma^{k}_{min}\leq \|x\|^{2}_{k}/\|x\|^{2}\leq \gamma^{k}_{max}$ for all 
$x\in \mathbb{R}^{n}$ and for all $k\geq 0$. Let $\{e_{m}\}_{1\leq m\leq n}$ denote the standard basis in $\mathbb{R}^{n}$ and use the polarization identity to obtain
\begin{align*}
    (B_{k})_{ij}= (e_{j},e_{i})_{k}=(1/4)(\|e_{j}+e_{i}\|^{2}_{k}-\|e_{j}-e_{i}\|^{2}_{k}).
\end{align*}
Recalling Equation~\eqref{eq: bounds for eigenvalues of B_k}, it follows that
\begin{equation} \label{eq: bounds for B_k}
    \epsilon/\|f\|_{C(\bar{\Omega})}\leq \gamma_{min}^{k}\leq (B_{k})_{ij}\leq \gamma_{max}^{k}\leq \|f^{1/n}\|_{C(\bar{\Omega})}/\epsilon,
\end{equation}
for all $1\leq i,j\leq n$ and $ k\geq 0$.
Thus, we have that 
 \begin{equation*}
      \tau\coloneqq \tr B_{k+1}(D^{2}u_{k+1}-D^{2}u_{k}) \xrightarrow[]{w^{*}} 0.
 \end{equation*}
 By the above and Equation~\eqref{eq: f equals to nu_k plus something} we have
\begin{equation*}
    f-\nu=\weakstar_{k\to \infty} (f -\nu_{k})=(f^{1/n}-\nu_{k}^{1/n})\sum_{j=0}^{n-1}(-1)^{j}f^{(n-1-j)/n}\nu_{k}^{j/n}=0.
\end{equation*}
Thus, $f=\nu$ as measures. Since all second partial derivatives of $\{u_{k}\}_{k\geq 0}$ are bounded, the Arzel\`a--Ascoli theorem gives that in fact $\tilde{u}\in C^{1,1}(\bar{\Omega})$.
\end{proof}

\section{Description of our algorithm}\label{sec:algorithm}

In the remainder of the paper, we will restrict our attention to the two-dimensional case, $n=2$, but we expect the method to be competitive also in higher dimensions.

Theorem~\ref{thm:convergence} provides the idea for our numerical method, which we name \emph{Bellman's algorithm} and describe below.

First, we discretize the domain $\Omega$ using a rectangular $N \times N$ mesh. For simplicity of the implementation, we only consider the case where $\Omega$ is a 
bounded rectangle  in $\mathbb{R}^2$, despite the fact that these domains are not 
strictly convex, and  the theoretical framework for solving Monge--Ampère equations 
on rectangles is less well-developed. 

Next, we construct a sequence of functions $u_k$ where $u_k$ solves the discretized
version of the following Dirichlet problem, using a central difference second order finite 
difference solver:
\begin{equation}\label{eq: trace equation}
    \begin{cases}
        \tr(B_{k}(x)D^{2}u_{k}(x)) = 2\sqrt{f(x)}, & x \in \Omega, \\
        u_k(x) = \phi(x) ,      & x \in \partial\Omega.
    \end{cases}
\end{equation}
Here $B_k$ is a sequence of $\mathcal{B}$-valued functions constructed as follows:
\begin{enumerate}
    \item \label{step: Poisson_eq} $u_{0}$ is obtained by choosing $B_{0}=I$, i.e.\ the identity matrix. We observe that this renders equation \eqref{eq: trace equation} a Dirichlet problem for the Poisson equation.
    \item \label{step: B_matrices} for $k\geq 1$ we loop over the mesh and set $B_{k}\coloneqq \sqrt{\det D^{2}u_{k-1}}(D^{2}u_{k-1})^{-1}.$ If the computed $D^2u_{k-1}$ is not positive definite, we mark the corresponding 
    point $x$, and temporarily take $B_k(x)$ as the identity matrix. 
    
    \item\label{step:interpolation}
    We loop over the mesh a second time and replace $B_k(x)$ at the marked 
    points with the (normalized to determinant~1) convex combination of the computed
    Bellman matrices corresponding to the nearest  grid points in horizontal and 
    vertical direction for which $D^{2}u_{k-1}$ was positive definite.
\end{enumerate}
    
We terminate the computation when $\|u_k - u_{k-1}\|_{\infty} < 10^{-12}$ or when 
the number of iterations exceeds \num{10000}.

If the solution of the original Monge--Ampère equation is strictly convex, we 
expect the constructed functions $u_k$ to be strictly convex as well, at least when~$k$
sufficiently large, but it can happen that convexity fails in the first few steps -- see
Example~\ref{sec:example_2} -- even 
when the final solution is strictly convex. The interpolation
step~\eqref{step:interpolation} is necessary to assure convergence and in 
practice it pushes the approximants $u_k$  to become more and more convex as~$k$
increases. We will illustrate this phenomenon more clearly in Section~\ref{sec:examples}.

If the solution of the original Monge--Ampère equation is not strictly convex, i.e.\ 
if $f$ has zeros, we cannot expect our method to converge. In practice, our proposed 
algorithm still performs very well when the set of degeneracy is small, for example 
if $f$ has an isolated zero, or even if the zero-set of $f$ is one-dimensional 
(cf.~Example~\ref{ex:degenerate}). 

In the fully degenerate case, where $f$ vanishes on a non-empty open set or even vanishes identically, our proposed method fails to converge,
or at least converges very slowly (cf.~Example~\ref{ex:CircularDegeneracy}).
We intend to return to the degenerate Monge--Ampère equations in a subsequent paper.

In our implementation of the Bellman method, we use the Python package FinDiff~\cite{FINDIFF} to perform the necessary computations. Also,
for efficiency reasons, the computations in step~(\ref{step: B_matrices}) are 
performed in parallel, utilizing NumPy's tensor operations, thus removing the need 
for an explicit loop over the mesh elements.

For comparison, we also implemented two methods (M1 and M2) proposed by 
Benamou et al.~\cite{Benamou}.

The M2 method is a fixed-point method which in spirit is somewhat similar to our method. The idea is to solve a 
sequence of  Dirichlet problems for the Poisson equation, and update the right hand side in each iteration,  whereas we instead change the partial differential operator at  each step, keeping  the right hand side constant. 
For detailed descriptions of these methods, see~\cite{Benamou}, but in short, for M2 we solve
\begin{equation} \label{eq:M2}
    \begin{cases}
        \Delta u_k = g_k(x),   & x \in \Omega, \\
        u_k(x) = \phi(x),      & x \in \partial\Omega.
    \end{cases}
\end{equation}
where $g_0 = 2\sqrt{f}$ and $g_{k+1} = T(g_k) = \sqrt{(\Delta u_k)^2 + 2(f-\det(D^2 u_k))}$.

It's not too hard to show that the solution to the original Monge--Ampère equation~\eqref{eq:MA} is obtained when $g$ is a fixed point to the 
operator ~$T$.
Note that our proposed algorithm can also be viewed as a fixed-point method:
If~$u$ is a strictly convex solution of the Monge--Ampère equation~\eqref{eq:MA}, 
and $B(x)$ is the associated Bellman function, the solution
of~\eqref{eq: trace equation} reproduces $u$.

Method M1 is different. Using a central difference
discretization of the second order derivatives of $u$, the discretized 
version of the Monge--Ampère equation
\[
    (\mathcal{D}^2_{xx} u_{ij})(\mathcal{D}^2_{yy} u_{ij}) -
    (\mathcal{D}^2_{xy} u_{ij})^2 = f_{ij}
\]
is rewritten as a quadratic equation for $u_{ij}$ as
\begin{equation}\label{eq:M1}
    4(a_1 - u_{ij})(a_2 - u_{ij}) - \frac{1}{4}(a_3-a_4)^2 = h^4 f_{ij},
\end{equation}
where 
\begin{equation*}
    \begin{cases}
        a_1 = \frac{1}{2} ( u_{i+1,j} + u_{i-1,j} ), \\
        a_2 = \frac{1}{2} ( u_{i,j+1} + u_{i,j-1} ), \\
        a_3 = \frac{1}{2} ( u_{i+1,j+1} + u_{i-1,j-1} ), \\
        a_4 = \frac{1}{2} ( u_{i-1,j+1} + u_{i+1,j-1} ).
    \end{cases}
\end{equation*}
By solving~\eqref{eq:M1} for $u_{ij}$, selecting the smaller solution 
of the quadratic equation to impose local convexity, we have an
iterative scheme for M1:
\begin{equation}\label{eq:M1_iterative}
    u_{ij} = \frac{1}{2}\left( 
        a_1 + a_2 + \sqrt{(a_1-a_2)^2 + \tfrac{1}{4}(a_3-a_4)^2 + h^4 f_{ij}}
    \,\right),
\end{equation}
which is then solved using Gauss--Seidel iteration, keeping the boundary 
values fixed and updating one $u_{ij}$ at the time, 
using~\eqref{eq:M1_iterative}. We should note that this is computationally 
expensive, especially in our NumPy implementation where explicit loops 
are slow, so M1, and to some extent our proposed method, are the ones that
would benefit the most from an optimized implementation. Here, it would be natural to
update the whole matrix of functions values at once, using~\eqref{eq:M1_iterative}
instead of doing it element by element, but apparently this destroys the convergence 
to the actual solution of the Monge--Ampère equation.

In our comparison we chose the same termination criterion for all methods, 
i.e.\ to  terminate the computation when  $\|u_k - u_{k-1}\|_{\infty} < 10^{-12}$ or the number of iterations exceeds \num{10000} (for M1, 
we allow up to \num{300000} iterations, where one iteration means 
updating each $u_{ij}$ once).

\section{Numerical experiments}\label{sec:examples}

In this section, we show the results of testing the Bellman algorithm on 
several examples with various degrees of smoothness and regularity of the
right-hand side of the Monge--Ampère equation, $f$, on the square 
$\Omega = [-1,1]^{2}$, unless otherwise stated. Moreover, we reproduce 
the algorithms M1 and M2 presented in~\cite{Benamou}. 

Then we compare the performance of both algorithms in terms of the
approximation of the exact solution (restricted to the mesh) $\tilde{u}$ with the numerical 
solution $u$, i.e.\ $\|u-\tilde{u}\|_{\infty}$, $\|u-\tilde{u}\|_{2}$, 
the number of iterations and CPU time needed for convergence. We also
demonstrate the effectiveness of interpolating the Bellman functions, i.e.\ $B_{k}(x)$ from Equation \eqref{eq: trace equation},
in our new algorithm on  Example~\ref{sec:example_2}.

\subsection{Smooth, strictly convex examples}

\subsubsection{Standard example} 

The first example that we will look at has the exact solution
\begin{equation*} \label{eq: standard_example}
    u(x,y)= e^{\frac{1}{2}(x^{2} +y^{2})}
\end{equation*}
of the Monge--Ampère equation with a right-hand side given by
\begin{equation*}
    \MA(u) = f(x,y)= (1+x^{2} +y^{2})e^{x^{2} +y^{2}}.
\end{equation*}
This example has been used in several previous papers, see for 
example~\cite{Benamou, Froese2, Belgacem}. Clearly, $u$ is $C^\infty$-smooth and strictly convex, 
so we expect any reasonable solver to give good results for this example.

In fact, the Bellman method produces a sequence of strictly convex approximants, 
and the interpolation step is never invoked. Comparing the Bellman method to the 
other methods, all approaches converge to the same solution regardless of 
mesh size, see Figure~\ref{fig: StandardExample_diff}. The Bellman method needs 
6--7 iterations to converge, whereas the M2 method requires 40--50 iterations, independent of mesh size. Even if each iteration in our method is computationally a little more demanding
than in the M2 method, the overall running time is 5--6 times faster for the 
Bellman method, see Figure~\ref{fig: StandardExample_cpu}. Both methods have 
similar running time complexity ($\sim N^{2.8}$ and $\sim N^{2.9}$, respectively).

For the M1 method, each iteration is very fast, but the number of iterations
needed for convergence grows about linearly with $N$, and 
\num{1500}--\num{90000} iterations are needed which results in slow
total running times and a worse time complexity ($\sim N^{4.0}$). For a $255 \times 255$ grid, 
the M1 method ran in just under \qty{3}{h}, where the Bellman method 
took \qty{24}{s} and the~M2 method around \qty{2}{min}.

\begin{figure}[tbph]
     \centering
     \begin{subfigure}[t]{0.48\textwidth}
        \includegraphics{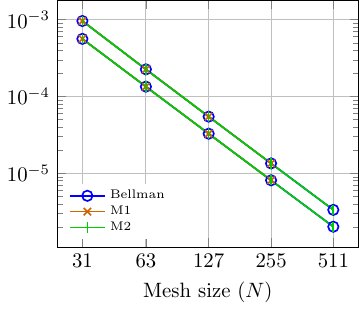}
         \caption{Error in supremum (top) and $\ell^{2}$ norm (bottom). The results for all three algorithms are almost identical and all graphs overlap. The results suggest that $\|u-u_N\| \sim N^{-2}$ for the sup-norm as well as the $\ell^2$-norm.}
         \label{fig: StandardExample_diff}
    \end{subfigure}
\hfill
\begin{subfigure}[t]{0.48\textwidth}
    \includegraphics{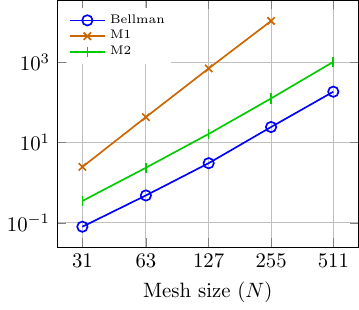}
     \caption{Running time (s). The results suggest that the running time is $\sim N^{2.8}$ for the Bellman method, $\sim N^{2.9}$ for M2 method and $\sim N^{4.0}$ for~M1.}
     \label{fig: StandardExample_cpu}
    \end{subfigure}
     \caption{Standard example: $u(x,y)= e^{0.5(x^2 +y^2)}$.}
     \label{fig: StandardExample_diff_and_diff1}
\end{figure}

\subsubsection{Degenerate example with regularisation}\label{sec:example_2}

\begin{figure}[tbph]
     \centering
     \begin{subfigure}[t]{0.48\textwidth}
         
        \includegraphics{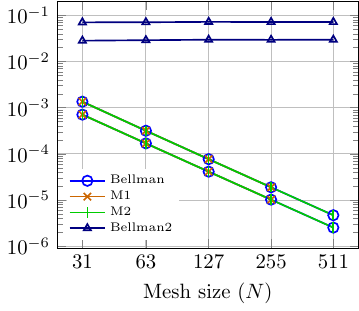}
         \caption{Error in supremum (top) and $\ell^{2}$ norm (bottom). The results for the 
         three algorithms are almost identical and suggest that $\|u-u_N\| \sim N^{-2}$, for both norms. The Bellman algorithm without interpolation (Bellman2 in the graph) does not converge.}
         \label{fig: DegenerateExampleWithRegularisation_diff}
    \end{subfigure}
 \hfill
\begin{subfigure}[t]{0.48\textwidth}
    \includegraphics{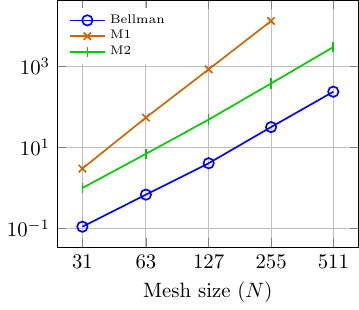}
     \caption{Running time (s). The results suggest that the running time is 
     $\sim N^{2.7}$ for the Bellman method, $\sim N^{2.8}$ for the M2 method and $\sim N^{4.0}$ for~M1.}
     \label{fig: DegenerateExampleWithRegularisation_cpu}
    \end{subfigure}
    \caption{Degenerate example with regularisation: $u(x,y)=0.5(x-0.5)^{4} + 0.1 x^{2}+y^{2}$.}
     \label{fig:example_2}
\end{figure}

\begin{figure}[tbph]
    \centering
    \begin{subfigure}[t]{0.24\textwidth}
    \centering
        \includegraphics[width=\textwidth]{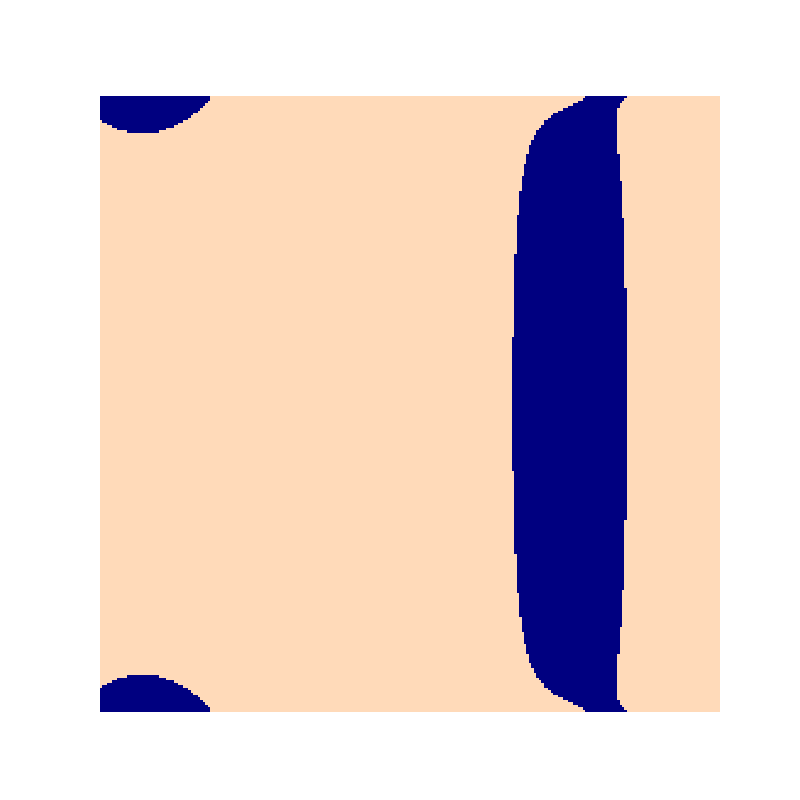}
        \caption{Iteration 1}
    \end{subfigure}
    \begin{subfigure}[t]{0.24\textwidth}
    \centering
        \includegraphics[width=\textwidth]{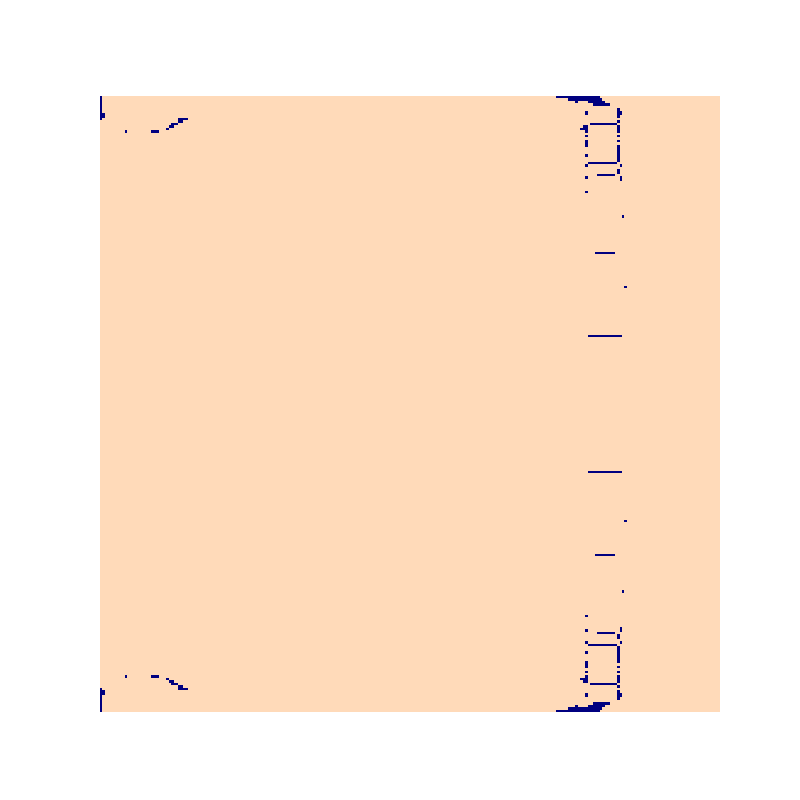}
        \caption{Iteration 2}
    \end{subfigure}
    \begin{subfigure}[t]{0.24\textwidth}
    \centering
        \includegraphics[width=\textwidth]{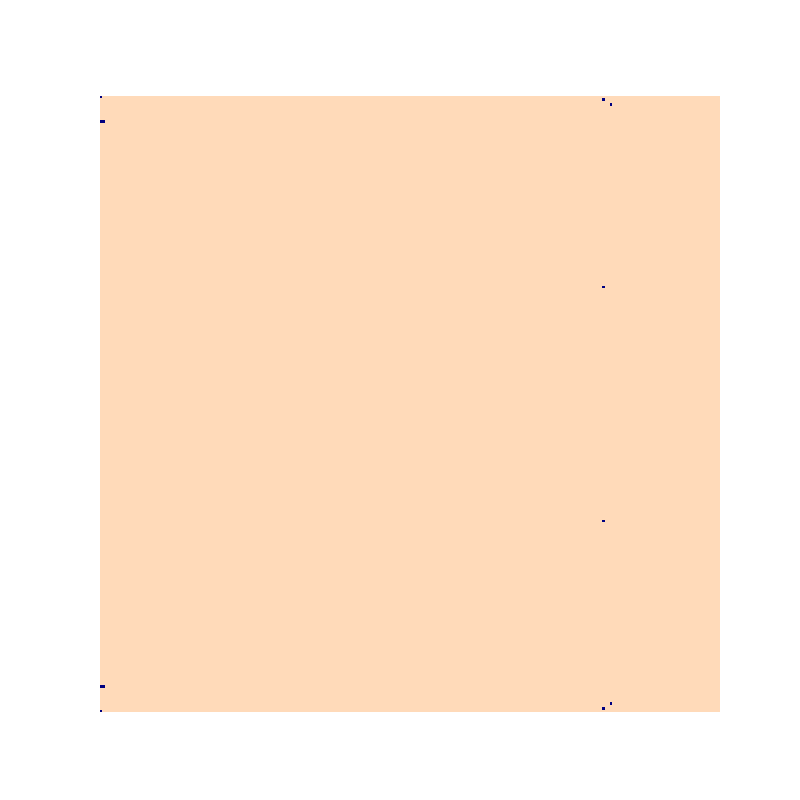}
        \caption{Iteration 3}
    \end{subfigure}
    \begin{subfigure}[t]{0.24\textwidth}
    \centering
        \includegraphics[width=\textwidth]{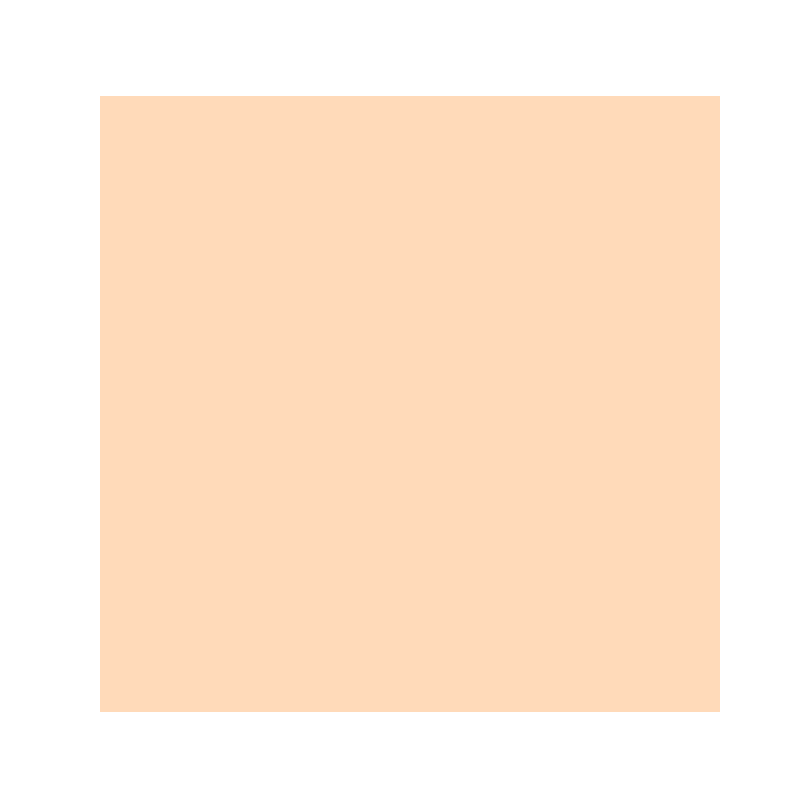}
        \caption{Iteration 4}
    \end{subfigure}
    \caption{Degenerate example with regularisation: $u(x,y)=0.5(x-0.5)^{4} + 0.1 x^{2}+y^{2}$. \\ The Bellman algorithm converges in 9 iterations (mesh size $N= 255$). The dark areas in the picture show the region where 
    the computed Hessian $D^2 u_k$ fails to be positive definite. After four
    iterations, the approximant is everywhere strictly convex and the interpolation step is no longer required.}
    \label{fig:example_2_singularity}
\end{figure}

For this example, the exact solution to the Monge--Ampère equation is  
\begin{equation}\label{eq: degenerate_example_with_regularisation}
    u(x,y)=0.5(x-0.5)^{4} + \epsilon x^{2}+y^{2},
\end{equation}
where $\epsilon \in \mathbb{R}_{+}$,
and the right-hand side of the equation is
\begin{equation*}\
    \MA(u) = f(x,y)= 12(x-0.5)^{2}+4\epsilon.
\end{equation*}
We note that the solution is smooth and strictly convex for each $\epsilon > 0$, 
but the smaller the choice of $\epsilon$, the less convex the exact solution is along the line $x=0.5$. 
We solve the Monge--Ampère equation for $\epsilon=0.1$ and observe that all three algorithms 
solve the Monge--Ampère equation successfully. 

However, the Bellman algorithm requires the additional interpolation step 
(step~\ref{step:interpolation}) in order to converge, see 
Figure~\ref{fig: DegenerateExampleWithRegularisation_diff}. In fact,  the solution to the 
corresponding Poisson equation,  $\Delta u = 2\sqrt{f}$ fails to be convex when 
$0 \le \epsilon \lesssim 0.2$, thus the first approximant produced by the Bellman method 
(as well as~M2) is not convex. After a few iterations in the Bellman method, 
convexity of  $u_k$ is achieved and the interpolation step is no longer needed, see Figure \ref{fig:example_2_singularity}.

For this example, the difference in running time is more pronounced, and the 
Bellman method runs \num{10}--\num{13} times faster than the M2 method. 
The Bellman method converges in less than~\num{10} iterations, whereas the M2 method require around~\num{140} iterations. See Figure~\ref{fig:example_2} for comparison. Again, the M1 method runs much slower and requires a large, 
increasing number of iterations as $N$ grows. For a $255 \times 255$ grid,
the Bellman method took \qty{31}{s}, the M2 needed
\qty{376}{s}, while M1 required around \num{3.5}{h}.

\subsection{Mildly singular examples}

\subsubsection{Trigonometric}

On the domain $\Omega=[0,1]^{2}$ we consider the example where the exact solution
is given by
\begin{equation*}
    u(x,y)=-\cos{(0.5\pi x)}-\cos{(0.5\pi y)}
\end{equation*}
with the corresponding right hand side of the Monge--Ampère equation
\begin{equation*}
    \MA(u) = f(x,y)=(0.5\pi)^{4}\cos{(0.5\pi x)}\cos{(0.5\pi y)}.
\end{equation*} 
We remark that $f=0$ on the boundary where $x=1$ or $y=1$. This implies that for 
$x\in \Omega$ near this part of the boundary, the Bellman function gives 
less and less strictly positive matrices $B_{k}(x)$. Despite this complication, 
all three algorithms converge and produce the same result, see Figure~\ref{fig: Trigonometric_diff.}. 
For this example, the Bellman algorithm runs 3~times faster than the M2 method, 
see Figure~\ref{fig: Triginometric_cpu}. The number of iterations needed for the convergence 
is 6--10 for the Bellman algorithm and~35 for the M2 algorithm, regardless of the mesh size. Yet again, M1 runs much slower: on a $255 \times 255$ grid, the Bellman method needs~\qty{31}{s}, the M2 method just over~\qty{90}{s} and M1 needs a little over~\qty{3}{h}.

\begin{figure}[tbph]
     \centering
         \begin{subfigure}[t]{0.48\textwidth}
        \includegraphics{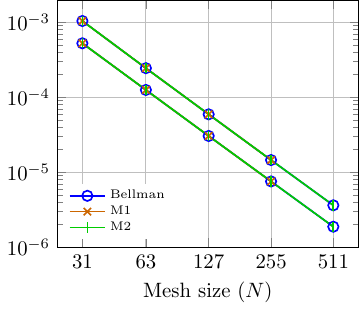}
         \caption{Error in supremum (top) and $\ell^{2}$ norm (bottom). The 
         results for all three algorithms are almost identical and suggest
         that $\|u-u_N\| \sim N^{-2}$ for both norms. All three graphs 
         overlap in the picture.}
         \label{fig: Trigonometric_diff.}
     \end{subfigure}
     \hfill
     \begin{subfigure}[t]{0.48\textwidth}
    \includegraphics{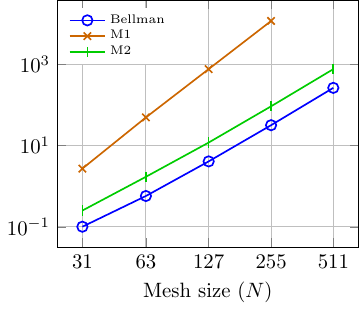}
         \caption{Running time (s). The results suggest that the running time 
         is $\sim N^{2.7}$ for the Bellman method and $\sim N^{2.8}$ for the M2, while the running time for M1 is $\sim N^{4.0}$.}
         \label{fig: Triginometric_cpu}
     \end{subfigure}
     \caption{Trigonometric: $u(x,y)=-\cos{(0.5\pi x)}-\cos{(0.5\pi y)}$.}
     \label{fig: Triginometric_cpu_and_diff}
\end{figure}

\subsubsection{Degenerate example}\label{ex:degenerate}

Let us reconsider Example~\ref{sec:example_2}, i.e. Equation~\eqref{eq: degenerate_example_with_regularisation}, and remove the 
regularisation term by setting $\epsilon=0$. In other words, we consider 
the exact solution
\begin{equation*}
    u(x,y)=0.5(x-0.5)^{4} + y^{2},
\end{equation*}
where 
\begin{equation*}
    \MA(u) = f(x,y) = 12(x-0.5)^{2}.
\end{equation*}
Note that the right-hand side vanishes along the line $x=0.5$, and while 
we can't expect the Bellman method to produce approximants with positive 
definite Hessians along this line, nevertheless, all three algorithms successfully 
tackle the degeneracy and converge to the correct solution, again producing the same results.

As in the strictly convex examples, the Bellman method requires around 10 iterations 
to converge, regardless of mesh size, but for this example the M2 method struggles, and the number of iterations grows, at a roughly linear rate, as the 
mesh size $N$ increases, see Table~\ref{table:degenerate_example}. This phenomenon 
heavily penalises the running time, which in the strictly convex settings 
was observed to be roughly proportional to $N^\alpha$ with $\alpha \approx 2.7$
for both the Bellman and M2 methods.

For this example, the running time of the Bellman method is still around $N^{2.7}$, 
but for the M2 method, the running time increases to $\sim N^{3.5}$ giving 
the Bellman method a huge advantage for large mesh sizes. For a $511 \times 511$ mesh,
the Bellman method ran in just over four minutes, and the method M2 needed 
almost eleven hours (on a reasonably fast personal computer). Here, the
performance of the~M1 method was closer to the~M2 method, but still about three times
slower.

\begin{table}[tbph]
    \caption{Running time for the degenerate example $u = 0.5(x-0.5)^4 + y^2$ on an $N \times N$ mesh. We only ran M1 on grid sizes up to $255 \times 255$.}\label{table:degenerate_example}
    \begin{tabular}{rrrrrrr}
        \toprule
        & \multicolumn{2}{c}{\emph{Bellman}} &  \multicolumn{2}{c}{\emph{M1}} &   \multicolumn{2}{c}{\emph{M2}}\\
         {$N$} & {iter}  & {time (s)} 
         & {iter} & {time (s)} & {iter} & {time (s)} \\
        \midrule 
  31  &  9   &  0.1   &        1990 &   3.0 &  260 &    2.0 \\
  63  &  9   &  0.7   &        7947 &  54.4 &  452 &   23.7 \\
 127  &  11  &   5.7  & \num{30559} & 867.7 &  758 &  269.6 \\
 255  &  10  &  33.8  & \num{115045}&  \num{13182.1}    & 1217 & 3327.4 \\
 511  &  10  & 243.1  &             &       & 1859 & \num{38946.9}  \\ 
 \bottomrule
    \end{tabular}
\end{table}

\begin{figure}[tbp]
     \centering
     \begin{subfigure}[t]{0.48\textwidth}
        \includegraphics{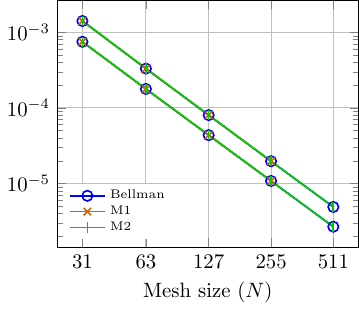}
         \caption{Error in supremum (top) and $\ell^{2}$ norm (bottom). 
         The results for all three algorithms are once again almost
         identical and suggest that $\|u-u_N\| \sim N^{-2}$ for both norms.}
         \label{fig: DegenerateExample_diff.}
    \end{subfigure}
 \hfill
\begin{subfigure}[t]{0.48\textwidth}
    \includegraphics{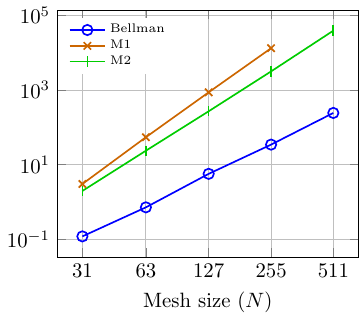}
     \caption{Running time (s). The results suggest that the running time is $\sim N^{2.7}$ for the Bellman method, $\sim N^{3.5}$ for M2 method and $\sim N^{4.0}$ for M1.}
     \label{fig: DegenerateExample_cpu}
    \end{subfigure}
     \caption{Degenerate example: $u(x,y)=0.5(x-0.5)^4 + y^2$.}
     \label{fig: DegenerateExample_cpu_and_diff}
\end{figure}

\subsubsection{Constant Monge--Ampère}\label{example:constant_MA}

Let us consider the Dirichlet problem 
\begin{equation}\label{eq:constant_MA}
    \begin{cases}
        \MA(u) \equiv 1 & \text{on $\Omega = [-1,1]^2$} \\
        u \equiv 1 & \text{on $\partial\Omega$}
    \end{cases}
\end{equation}
The exact solution of~\eqref{eq:constant_MA} is unknown, and even though the setup looks 
innocent enough at first 
glance, this example has shown to be a challenge to most if not all numerical algorithms 
that have been proposed for the Monge--Ampère equation. The problem is that the boundary 
conditions force $u$ to be almost constant on lines parallel to and close to the boundary of the square,
and the condition $\det(D^2 u) = 1$ then forces the second order derivative in the normal 
direction to blow up. Consequently, the eigenvalues of the Hessian are neither bounded from above nor away from zero on $\bar{\Omega}$. It follows from results by B\l{}ocki~\cite{Blocki} that the solution 
is $C^{1,1}(\Omega) \cap C(\bar\Omega)$, but by the argument above, we can't expect higher
order regularity of $u$ up to $\partial\Omega$.

\begin{figure}[tbph] 
    \centering
    \includegraphics{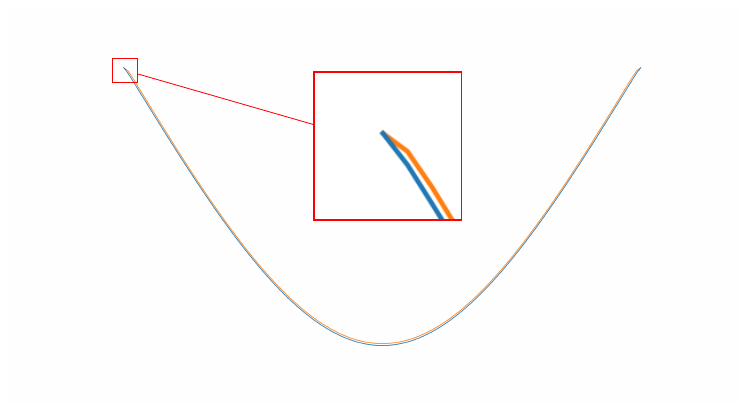}
    \caption{Constant MA example. The figure show cross-sections along the
    domain's diagonal of the solutions given by the Bellman and M1 algorithm, $u_{B}$ (top) and $u_{M1}$ (bottom), respectively. Both solutions 
    fail to be convex near the corners of the square}
    \label{fig: Constant_MA_solutions_diagonal}
\end{figure}

\begin{table}[tbph]
    \caption{Minimum value for the computed solution to the constant Monge--Ampère example. Method M1 and M2 produce the same values (to four points of accuracy). The computations with the M2 method were terminated after \num{10000} iterations, and the computations with M1 was terminated 
    after \num{300000} iterations, marked by * in the table. 
    The results for the wide stencil
    finite difference discretizations (4th--6th column) are taken from Oberman~\cite{Oberman}.}\label{table:constant_MA_min}
    \begin{tabular}{cccccc}
        \toprule
         {$N$}& {Bellman} &  { M1, M2} & {9pt stencil}& {17pt stencil} & {33pt stencil} \\
        \midrule 
      21 & 0.2917 & 0.2892 & 0.3115 & 0.2815 & 0.2839 \\
      41 & 0.2772 & 0.2734 & 0.3090 & 0.2807 & 0.2732 \\ 
      61 & 0.2721 & 0.2682 & 0.3082 & 0.2803 & 0.2711 \\
      81 & 0.2712 & 0.2655 & 0.3078 & 0.2802 & 0.2704 \\
     101 & 0.2694 & 0.2639 & 0.3076 & 0.2800 & 0.2700 \\
     127 & 0.2677 & 0.2626 \\
     255 & 0.2651 &  * \\
     511 & 0.2628 &  * \\
 \bottomrule
\end{tabular}
\end{table}

\begin{table}[tbp]
    \caption{Running time for the constant Monge--Ampère example. 
    Computations marked by *  were terminated after \num{10000} (for method M2) and 
    \num{300000} (for M1) iterations.}\label{table:constant_MA}
    \begin{tabular}{SSSSSSS}
        \toprule
        & \multicolumn{2}{c}{\emph{Bellman}} & \multicolumn{2}{c}{\emph{M1}} & \multicolumn{2}{c}{\emph{M2}} \\
         {$N$} & {iter} & {time (s)} & {iter} & {time (s)}  & {iter} & {time (s)} \\
        \midrule 
  31  &  17   &  0.1     &   2902  &      4.4 &   438    &      6.7  \\
  41  &  19   &  0.5     &   5086  &     14.3 &   1539   &     23.2  \\
  61  &  19   &  1.6     &  11146  &     69.9 &   3224   &    150.3  \\
  81  &  21   &  3.8     &  19379  &    222.5 &   5443   &    546.8  \\
 101  &  19   &  5.9     &  29698  &    518.3 &   8158   &   1513.7  \\
 127  &  20   &  10.8    &  46152  &   1297.7 &    *     &   *       \\
 255  &  29   & 102.7    & 174401  &  20896.0 &    *     &   *       \\
 511  &  25   & 870.7    &   *     &   *      &    *     &   *       \\
 \bottomrule
\end{tabular}
\end{table}

Running the Bellman algorithm on Equation~\eqref{eq:constant_MA} produces a function that 
(barely) fails to be convex near the corners of $\Omega$, but the same is true for 
Method M2, see Figure~\ref{fig: Constant_MA_solutions_diagonal}.
This example was also studied by Dean and Glowinski~\cite{Dean} 
(using a variational formulation) and Oberman~\cite{Oberman} 
(wide stencil finite difference discretization), all with similar results. 
In~\cite{Benamou}, they suggest comparing the different methods using 
the minimum (i.e.\ the value at the origin) of the solution. The wide stencils 
methods from Oberman~\cite{Oberman} are known to converge monotonically to a viscocity supersolution 
of Equation~\eqref{eq:MA}, and are thus too large. If the solution produced by the
Bellman method had been convex, monotonicity  would have implied 
that it too is too large, but since convexity fails near the corners of the square, 
we have no guarantee for this. Our method produces a solution which is slightly 
larger than the M2 and M1 methods, see Table~\ref{table:constant_MA_min}.

The two methods used in~\cite{Benamou} were observed to have running times 
of $\sim N^{4.0}$ (M1) and $\sim N^{4.6}$ (M2), respectively (results were reported 
for grid sizes $21 \le N \le 141$). When we ran those two methods on grid sizes
up to $255 \times 255$ (for M1), we observed similar running times.

The M2 method converges very slowly even for moderately large grid sizes,
and was terminated after 10,000 iterations when $N \ge 127$. The available results
for smaller meshes suggest a running time of $\sim N^{4.6}$. For this example, M1 outperforms M2 and runs at $\sim N^{4.0}$. The Bellman method is an order of magnitude faster 
still, and clocks in at $\sim N^{3.0}$, see Table~\ref{table:constant_MA}.
Already on a $101 \times 101$ grid, the Bellman method runs more than~300 times 
faster than the M2 method and about~80 times faster than~M1.

\begin{figure}[htb]
     \centering
     \begin{subfigure}[t]{0.48\textwidth}
        \includegraphics{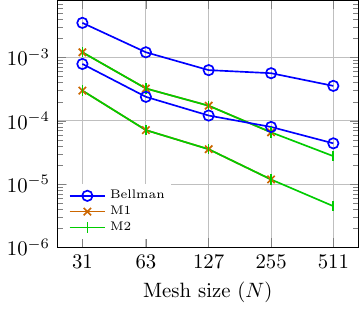}
         \caption{Error in supremum and $\ell^{2}$ norm. The errors for Bellman method are worse with $\|u-u_N\|_{\infty} \sim N^{-0.8}$ and $\|u-u_N\|_{\ell^2} \sim N^{-1.0}$. The other methods produce 
         almost identical results with $\|u-u_N\|_{\infty} \sim N^{-1.3}$ and $\|u-u_N\|_{\ell^2} \sim N^{-1.5}$.}
         \label{fig: CircularDegeneracy_diff.}
     \end{subfigure}
     \hfill
     \begin{subfigure}[t]{0.48\textwidth}
    \includegraphics{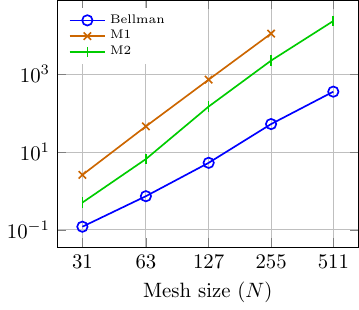}
         \caption{Running time (s). The results suggest that the running time is $\sim N^{2.9}$ for the Bellman method, $\sim N^{3.9}$ for the M2 method and $~\sim N^{4.0}$ for~M1.}
         \label{fig: CircularDegeneracy_cpu}
     \end{subfigure}
     \medskip 
     
     \begin{subfigure}[t]{0.45\textwidth}
         \centering
         \includegraphics[width=1\textwidth,height=0.92\textwidth]{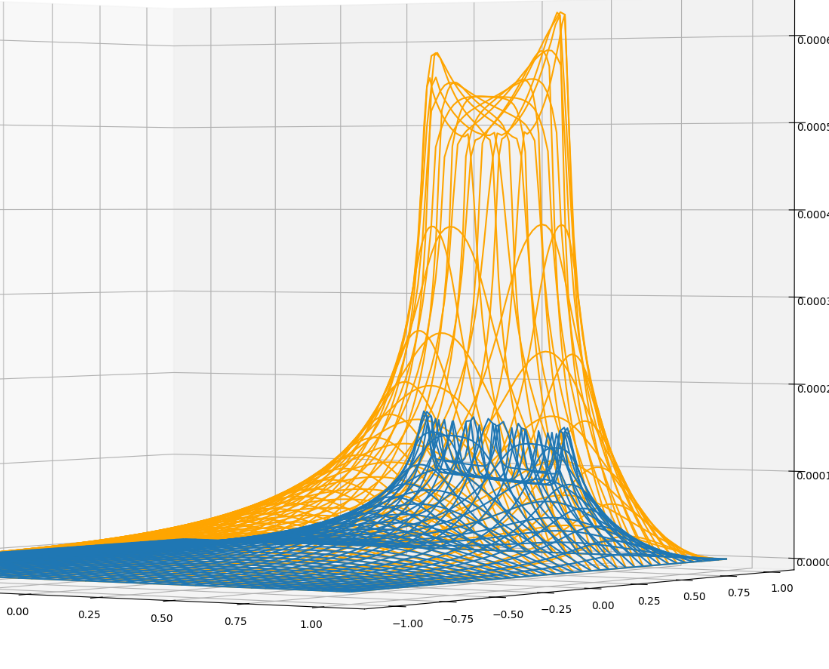}
         \caption{Difference $u_{B}-u$ (light) and $u_{M2}-u$ (dark), given by Bellman and Method M2, respectively.}
         \label{fig: Circular_Degeneracy_diff_solutions}
     \end{subfigure}
     \hfill
    \begin{subfigure}[t]{0.48\textwidth}
        \includegraphics{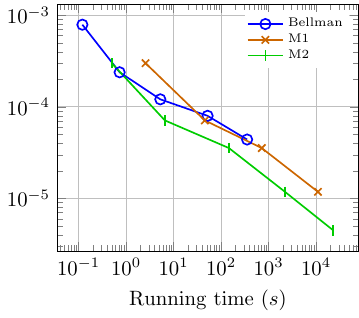}
             \caption{Comparing the running time to the achieved accuracy ($\ell^2$ norm),
             the performance of the three methods are fairly similar.}\label{fig:Circular_degeneracy_acc_to_cpu}
    \end{subfigure}
     \caption{Circular Degeneracy:\\ $u(x,y)= \frac{1}{2}(\sqrt{(x-0.5)^2+(y-0.5)^2}-0.2)^{+}$.}
     \label{fig: Circular_degeneracy_diff_cpu_diff_solutions}
\end{figure}

\subsection{Singular examples}

\subsubsection{Circular degeneracy}\label{ex:CircularDegeneracy}

We consider 
\begin{equation}\label{eq: circular_degeneracy}
   u(x,y)= 0.5(\sqrt{(x-0.5)^2+(y-0.5)^2}-0.2)^{+},
\end{equation}
which would be a solution to the Monge--Ampère equation with the following right-hand side
\begin{equation*}
    \MA(u) = f(x,y)=\frac{(\sqrt{(x-0.5)^2+(y-0.5)^2}-0.2)^{+}}{\sqrt{(x-0.5)^2+(y-0.5)^2}}.
\end{equation*}
In this example, $u \in C^{1,1}$, but $f = \MA(u)$ vanishes on a disc 
centred at $(0.5,0.5)$ with radius $r=0.2$. 
Here, both the~M1 and the~M2 methods outperform the Bellman algorithm, see Figure~\ref{fig: CircularDegeneracy_diff.}. As expected, the Bellman algorithm struggles on large areas where the exact solution is not strictly convex, that is on the aforementioned disc. This is the area where the solutions $u_{\text{B}}$ and $u_{M2}$ deviate the most from the exact solution $u$, see Figure~\ref{fig: Circular_Degeneracy_diff_solutions}. In fact, we see that the error in the supremum and $\ell^{2}$ norms are proportional to~$N^{-0.8}$ and~$N^{-1.0}$, respectively, for the Bellman algorithm, whereas they are around $N^{-1.3}$ and $N^{-1.5}$ for the other methods.

While the accuracy of the 
Bellman method is notably worse than the other methods, it seems like all methods still converge to the exact solution as 
the mesh size $N$ tends to $\infty$, but the rate of convergence, for all methods, is 
slower than for the strictly convex examples, see Figure~\ref{fig: CircularDegeneracy_diff.}. 

Even though the Bellman method gives worse results, it still outperforms the other methods
with respect to running time. The measured time complexity for the Bellman method is similar 
to the strictly convex examples ($\sim N^{2.9}$), whereas the running time of the M2 method 
is~$\sim N^{3.9}$ , resulting in a running time that is about~70 times slower than the Bellman method 
for a $511 \times 511$ mesh. The number of iterations required is 6--14 for the Bellman method, 
where Method M2 needs an increasing number of iterations as the number of mesh points increases
(1092 for the $511 \times 511$ mesh). The M1 method runs even slower. 
If we look at accuracy as a function of computational time (see Figure~\ref{fig:Circular_degeneracy_acc_to_cpu}), we see that all three methods
perform fairly similarly.

\begin{remark}
    We would like to remark that the Bellman algorithm fails to converge when the right-hand side vanishes on the domain, i.e. $f\equiv 0$ on $\Omega$. One example of such a function could be $u(x,y)=|x|$. Here, the solution given by our method is the solution to the Poisson equation that is the first iteration. Method M2 converges. However, the convergence is very slow, and the running time grows from 1~min to 6~hours as the mesh size increases, and with diminished accuracy, as the method is terminated after 10\,000 iterations.
\end{remark}

\subsubsection{Unbounded Monge--Ampère}

We investigate the convergence for an unbounded right-hand side.
For $\Omega=[0,1]^{2}$ and
\begin{equation*}
    u(x,y)=-\sqrt{2-x^2-y^2}.
\end{equation*}
The right-hand side of the equation is 
\begin{equation*}
    \MA(u) = f(x,y)=\frac{2}{(2-x^2-y^2)^2}
\end{equation*}
and we note that $f$ is unbounded near the point $(1,1)$. 
All three algorithms struggle, and only achieve an accuracy of order $N^{-0.5}$
(measured in sup-norm), which is the same as observed by Benamou et al.~\cite{Benamou} for~M1 and~M2. 
Most of the error is 
concentrated near the problematic point $(1,1)$, and if convergence is measured by
average $\ell^2$ norm instead, the accuracy is $N^{-1.5}$. All algorithms run 
slowly, and the Bellman method and M2 runs at roughly 
the same speed, while M1 is about 20 times slower.

Curiously enough, the Bellman method seems to enjoy 
a relative speed-up as well as an improvement in 
accuracy for larger meshes, see Figure~\ref{fig: UnboundedMA_cpu_diff_solutions_delta}.

If we instead do the computations on a slightly smaller square, e.g.~$[0, 0.99]^2$,
the three methods run at full accuracy ($ \sim N^{-2}$), and the Bellman method runs 
2--3 times faster than M2.

\begin{figure}[h!]
     \centering
     \begin{subfigure}[t]{0.48\textwidth}
        \includegraphics{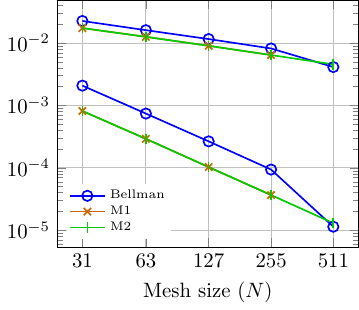}
         \caption{Error in supremum (top) and $\ell^{2}$ norm (bottom). The results for all three algorithms are fairly close, but not identical, and suggest that $\|u-u_N\|_{\infty} \sim N^{-0.5}$ and $\|u-u_N\|_{\ell^2} \sim N^{-1.5}$.}
         \label{fig: UnboundedMA_diff.}
     \end{subfigure}
     \hfill
     \begin{subfigure}[t]{0.48\textwidth}
    \includegraphics{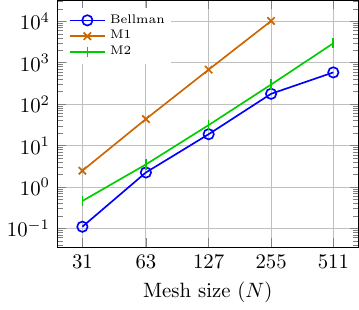}
         \caption{Running time (s). The results suggest that the running time is $\sim N^{3.0}$ for the Bellman method, $\sim N^{3.1}$ for M2 and $\sim N^{4.0}$ for~M1.}
         \label{fig: UnboundedMA_cpu}
     \end{subfigure}
    \begin{subfigure}[t]{0.48\textwidth}
        \includegraphics{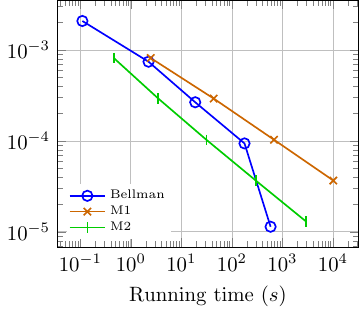}
         \caption{Comparing the running time to achieved accuracy ($\ell^2$
         norm), the performance of the three methods are fairly similar, with 
         the Bellman method winning for very fine meshes.}
     \end{subfigure}
     \caption{Unbounded MA: $u(x,y)=-\sqrt{2-x^2-y^2}$.}
     \label{fig: UnboundedMA_cpu_diff_solutions_delta}
\end{figure}

\section{Conclusions}
The Bellman algorithm relies heavily on the assumption that the solution to the Monge--Ampère 
equation is strictly convex on the domain. When this assumption is satisfied, the performance 
of the algorithm is very good. Using an $N\times N$ mesh, the discretization error in supremum 
and $\ell^{2}$ norm is proportional to $N^{-2}$, and is achieved with few iterations (typically 
fewer than $10$) resulting in a running time which is proportional to $N^{2.7}$--$N^{2.8}$. 

Similar performance was also observed in cases where the right hand side is mildly degenerate, 
such as the set where $f=0$ is at most one dimensional. In this setting, our method requires 
a few more iterations to converge (typically 10--15), but this only has a minor effect on the 
running time. In comparison, the M2 method converges a lot slower for this class of 
examples, typically needing more and more iterations as the number of grid points increases,
resulting in running times that vary between $N^{2.8}$ and $N^{3.6}$. 

The net effect is that our method runs 3--10 times faster than the M2 method on smooth
strictly convex examples, but as much as 20--100 (or more) times faster for the mildly degenerate 
cases. The M1 method is even slower, and has a running time proportional to around $N^{4.0}$ for most examples. With the exception of the constant Monge--Ampère example, M1 runs substantially slower than M2.

In the most challenging examples, when $f$ vanishes on an open set or when $f$ is unbounded,
the Bellman method begins to struggle. While the running time and the number of iterations is 
similar to the less challenging examples, the accuracy becomes worse resulting in supremum norm
error between $N^{-0.8}$ and $N^{-0.5}$. In these examples, the errors are mostly concentrated 
near the zero set (or the unbounded locus) of $f$, and measuring the error in $\ell^2$ norm gives 
slightly better results, with an error between $N^{-1.0}$ and $N^{-1.5}$.

On the other hand, other published methods struggle with these more challenging examples as well.
Method M2 outperforms the Bellman method in the circular degeneracy example
(Example~\ref{ex:CircularDegeneracy}), at the cost of very slow running times. In all the other examples
that we have considered, the accuracy of the Bellman method, M2 method and~M1 are very similar, 
indicating that all methods converge to the optimal discretization of the underlying PDE on the 
given mesh, and in almost all examples the Bellman method runs considerably faster than the remaining methods.


\end{document}